\nonstopmode \numberwithin{equation}{section}
\newtheorem{thm}{Theorem}
\newtheorem{lem}{Lemma}
\newtheorem{cor}{Corollary}
\newtheorem{ques}{Question}
\newtheorem{cl}{Claim}
\newtheorem{ca}{Case}
\newtheorem{sca}{Subcase}
\newtheorem{scl}{Subclaim}
\newtheorem{conj}[equation]{Conjecture}
\theoremstyle{definition}
\newtheorem{defn}{Definition}
\newtheorem{example}{Example}[section]
\newtheorem{op}[equation]{Open Problem}
\newtheorem{rem}{Remark}
\newcounter {own}
\def\theown {\thesection       .\arabic{own}}
\newenvironment{pf}[1][]{%
 \vskip 3mm
 \noindent
 \ifthenelse{\equal{#1}{}}%
  {{\slshape Proof. }}%
  {{\slshape #1.} }%
 }%
{\qed\bigskip}
\DeclareMathOperator*{\esslim}{ess\,lim}
\newcounter{alphabet}
\newcounter{tmp}
\newenvironment{Thm}[1][]{\refstepcounter{alphabet}%
\bigskip%
\noindent%
{\bf Theorem \Alph{alphabet}}%
\ifthenelse{\equal{#1}{}}{}{ (#1)}%
{\bf .} \itshape}{\vskip 8pt}
\renewcommand{\Ref}[1]{\@ifundefined{r@#1}{}{\setcounter{tmp}{\ref{#1}}\Alph{tmp}}}
\def\be{\begin{equation}}
\def\ee{\end{equation}}
\newcommand{\bee}{\begin{enumerate}}
\newcommand{\eee}{\end{enumerate}}
\newcommand{\blem}{\begin{lem}}
\newcommand{\elem}{\end{lem}}
\newcommand{\bthm}{\begin{thm}}
\newcommand{\ethm}{\end{thm}}
\newcommand{\bcor}{\begin{cor}}
\newcommand{\ecor}{\end{cor}}
\newcommand{\beg}{\begin{exam}}
\newcommand{\eeg}{\end{exam}}
\newcommand{\begs}{\begin{examples}}
\newcommand{\eegs}{\end{examples}}
\newcommand{\bdefe}{\begin{defn}}
\newcommand{\edefe}{\end{defn}}
\newcommand{\bprob}{\begin{prob}}
\newcommand{\eprob}{\end{prob}}
\newcommand{\bques}{\begin{ques}}
\newcommand{\eques}{\end{ques}}
\newcommand{\bei}{\begin{itemize}}
\newcommand{\eei}{\end{itemize}}
\newcommand{\bcon}{\begin{conj}}
\newcommand{\econ}{\end{conj}}
\newcommand{\bop}{\begin{op}}
\newcommand{\eop}{\end{op}}
\newcommand{\bca}{\begin{ca}}
\newcommand{\eca}{\end{ca}}
\newcommand{\bsca}{\begin{sca}}
\newcommand{\esca}{\end{sca}}
\newcommand{\bcl}{\begin{cl}}
\newcommand{\ecl}{\end{cl}}
\newcommand{\bscl}{\begin{scl}}
\newcommand{\escl}{\end{scl}}
\newcommand{\bcons}{\begin{conjs}}
\newcommand{\econs}{\end{conjs}}
\newcommand{\bprop}{\begin{propo}}
\newcommand{\eprop}{\end{propo}}
\newcommand{\br}{\begin{rem}}
\newcommand{\er}{\end{rem}}
\newcommand{\brs}{\begin{rems}}
\newcommand{\ers}{\end{rems}}
\newcommand{\bo}{\begin{obser}}
\newcommand{\eo}{\end{obser}}
\newcommand{\bos}{\begin{obsers}}
\newcommand{\eos}{\end{obsers}}
\newcommand{\bpf}{\begin{pf}}
\newcommand{\epf}{\end{pf}}
\newcommand{\ba}{\begin{array}}
\newcommand{\ea}{\end{array}}
\newcommand{\beq}{\begin{eqnarray}}
\newcommand{\beqq}{\begin{eqnarray*}}
\newcommand{\eeq}{\end{eqnarray}}
\newcommand{\eeqq}{\end{eqnarray*}}
\newcounter{minutes}\setcounter{minutes}{\time}
\newcounter{hours}\setcounter{hours}{\time}
\begin{document}
\bibliographystyle{amsplain}
\title [] {A note on polyharmonic mappings}
\author{D. Bshouty, S. Evdoridis, and A. Rasila$^*$}

\begin{abstract}
In this paper we prove a Rad\'o type result showing that there is no univalent polyharmonic mapping of the unit disk onto the whole complex plane. We also establish a connection between the boundary functions of harmonic and biharmonic mappings. Finally, we show how a close-to-convex biharmonic mapping can be constructed from a convex harmonic mapping.
\end{abstract}

\thanks{The research was supported by Academy of Finland (No. 308063), NNSF of China (No. 11971124), NSF of the Guangdong Province, and  Foundation for Aalto University Science and Technology.\\$^*$ Corresponding author}

\maketitle
\section{Introduction and Preliminaries}

A complex-valued function $F\colon D \to \mathbb{C}$, with $F(z)=u(z)+iv(z)$, is called a {\it harmonic mapping} in a plane domain $D\subset \mathbb{C}$, if both $u,v\colon \mathbb{R}^2 \to \mathbb{R}$ are harmonic functions in $D$. Then, we write $\Delta F=0$, where $\Delta $ is the complex Laplacian operator
$$\Delta = 4 \frac{\partial ^2}{\partial z \partial \overline{z}} = \frac{\partial ^2}{\partial x^2} + \frac{\partial ^2}{\partial y^2}.$$
For a simply connected domain $D$, every harmonic mapping $F\colon D\to \mathbb{C}$ in $D$ can be expressed in the form $F=h+ \overline{g}$, where $h$ and $g$ are analytic functions in $D$. The mapping $F$ is locally univalent in $D$ if, and only if, it has a non-vanishing Jacobian, $$J_F(z):=|h'(z)|^2-|g'(z)|^2 \neq 0,$$
in $D$. If $J_F(z)>0$ ($J_F(z)<0$), $F$ is { \it sense-preserving} ({\it sense-reversing}) in $D$. More details about harmonic mappings can be found in \cite{CS, D, PR}.

\subsection{Biharmonic and Polyharmonic Mappings}
A function $f$, defined in a domain $D\subset \mathbb{C}$, is called a {\it polyharmonic}  (or {\it $p$-harmonic}) mapping, if  $\Delta ^p f =0$ for $p\in \mathbb N\geq 1$, where  $\Delta ^p f =\Delta (\Delta ^{p-1} f)$ and $\Delta ^1 f = \Delta f$. Clearly, for $p=1$, $f$ is a harmonic mapping.  When $D$ is a simply connected domain $f$ admits the representation
\begin{equation}
\label{form}
 f(z)=\sum _{k=1}^p |z|^{2(k-1)} G_k(z),
\end{equation}
where $G_k$; $k=1,2,\ldots ,p$ are harmonic mappings in $D$. In the notable special case where $p=2$, the function satisfies the equation $\Delta^2 f=0$ and it is known as a {\it biharmonic mapping}. These functions are related with several applications in fields such as engineering, where connections with elasticity theory and fluid dynamics can be found in the literature (see {\it e.g.}  \cite{HB, Kh, L}).

By using the representation \eqref{form}, many properties of harmonic mappings can be generalized to the polyharmonic mappings. This line of research was started in by S. Chen, Ponnusamy, Qiao, and Wang in \cite {CPW, QW}, and continued by the third author with J. Chen and Wang in  the series of articles  {\cite {CRW1, CRW2, CRW3, CRW4, CRW5} where, for example, the radii of univalency, starlikeness and convexity were studied for polyharmonic mappings.

On the other hand, many popular techniques used in studying harmonic mappings are not available in the biharmonic case, and new methodologies are required. This paper is an attempt to this direction, with the key ingredient being the use of auxiliary harmonic mappings of the type
\begin{equation}
\label{auxharm}
 g_r(z) = \sum_{k=1}^p r^{2(k-1)} F_k(z),
\end{equation}
where $r\in (0,1)$. Besides its significance from the point of view of applications, this case is generally more pleasant to study as it leads to easily traceable formulas. For this reason, the last three of our main results are only formulated in the biharmonic case.

\subsection{Close-to-convex domains and mappings}
A domain $D$ is said to be {\it close-to-convex} if its complement can be expressed as a union of closed half-lines such that the corresponding open half-lines are disjoint. We now restrict our attention on functions defined in the unit disk $\mathbb{D}$. We denote by $S_H$ the class of univalent sense-preserving harmonic mappings, defined in $\mathbb{D}$, with $F(0)=0$ and $F_z(0)=1$. The subclass of $S_H$, for which $F(\mathbb{D})$ is convex, is denoted by $K_H$.

An analytic function $h(z)$ is said to be close-to-convex in $\mathbb D$ if $h(\mathbb D)$ is a
close-to-convex domain. Every close-to-convex analytic function is univalent. Close-to-convexity is a {\it hereditary property} in the sense that for $\mathbb D_r=\{|z|<r\};\;\; 0<r<1$ the image set $h(\mathbb D_r)$ is a close-to-convex domain.

A general complex valued mapping $f$ in $\mathbb{D}$ such that $f(\mathbb D)$ is a close-to-convex domain, may neither be univalent, nor satisfy the above hereditary close-to-convexity property. In this paper, a mapping $f$ of the unit disk onto a close-to-convex domain that is univalent is termed a close-to-convex mapping. If, in addition, $f$ satisfies the hereditary property, then it is termed {\it fully close-to-convex}. These concepts are applicable both to harmonic and polyharmonic mappings.


\section{Main Results}

In this paper, we discuss the following three types of results for polyharmonic and biharmonic mappings.

\subsection*{{\bf I.} Rad\'o's Theorem for Polyharmonic Mappings}  Tibor Rad\'o \cite{R} was the first who stated and proved a special case of the so-called Rad\' o's Theorem for harmonic mappings. Proofs of more general forms of the theorem were given later by Bers \cite { B} and  Nitsche {\cite N}. The statement of the theorem is the following.

\begin{Thm} [{\rm Rad\' o}] \label{Rado}
There is no univalent harmonic mapping from $\mathbb{D} $ onto $\mathbb{C}$.
\end{Thm}

Our goal is to prove the same theorem for polyharmonic mappings. We shall need
 the well-known Kneser theorem. In 1926, Rad\'o posed the problem, if any homeomorphism of the unit circle onto the boundary of a bounded convex domain, extends to a univalent harmonic mapping on the whole disk. Kneser, in the same volume \cite{Kn}, proved the following more general theorem.

\begin{Thm} [{\rm Kneser}] \label{Kneser}
Let $\Omega$ be a bounded simply connected Jordan domain. Consider a homeomorphism $g^*(e^{it})$ from $\partial \mathbb D$  onto $\partial\Omega.$ Let $g(z)$ be the harmonic extension of $g^*$ into the disk. If $g(\mathbb D)\subset \Omega$ then $g$ is univalent in $\mathbb D.$
\end{Thm}

In 1945, Choquet gave a more analytic solution to Rad\'o's problem in \cite{Cho}. Nowadays, this theorem is commonly known as the Rad\'o-Kneser-Choquet theorem.

In this paper Theorem \Ref{Rado} and Theorem \Ref{Kneser}  will be used to characterize univalent polyharmonic mappings in $\mathbb D$ via  terms of univalent harmonic mappings to prove the following version of Rad\'o's theorem for polyharmonic mappings. 

\begin{thm}
\label{main1}
There is no univalent polyharmonic mapping of the unit disk onto the whole complex plane.
\end{thm}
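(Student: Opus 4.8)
The plan is to reduce the polyharmonic statement to the classical harmonic Rad\'o theorem by exploiting that, on each circle $|z|=r$, the polyharmonic map $f(z)=\sum_{k=1}^p|z|^{2(k-1)}G_k(z)$ coincides with the \emph{harmonic} auxiliary map $g_r(z)=\sum_{k=1}^p r^{2(k-1)}G_k(z)$. So I would argue by contradiction: suppose $f$ is univalent with $f(\mathbb{D})=\mathbb{C}$. First I would record the elementary consequences. For each $r\in(0,1)$ the set $\Omega_r:=f(\mathbb{D}_r)$ is a bounded Jordan domain whose boundary is the Jordan curve $f(\{|z|=r\})$, by univalence together with continuity of $f$ on the compact set $\overline{\mathbb{D}_r}$. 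These domains increase with $\bigcup_r\Omega_r=\mathbb{C}$; since every disk $\{|w|<M\}$ is compact it is swallowed by a single $\Omega_s$, which forces $\partial\Omega_r$ to recede to infinity and hence $|f(z)|\to\infty$ uniformly as $|z|\to1$.

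Next I would identify $g_r$ as a harmonic extension. Because $f=g_r$ on $\{|z|=r\}$ and $g_r$ is harmonic on $\mathbb{D}$, the map $w\mapsto g_r(rw)$ is exactly the harmonic extension into $\mathbb{D}$ of the boundary homeomorphism $e^{it}\mapsto f(re^{it})$ of $\partial\mathbb{D}$ onto the Jordan curve $\partial\Omega_r$. The aim is to invoke Kneser's theorem to conclude that $g_r$ is univalent on $\mathbb{D}_r$ with $g_r(\mathbb{D}_r)=\Omega_r$. Kneser's hypothesis is the containment $g_r(\mathbb{D}_r)\subseteq\Omega_r$, and verifying it is the heart of the matter.

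The main obstacle is precisely this containment, since $g_r$ and $f$ agree only on $|z|=r$ and differ throughout the interior (their difference being $\sum_{k\ge2}(r^{2(k-1)}-|z|^{2(k-1)})G_k(z)$, for which there is no a priori control). I would attack it by degree theory: the boundary curve $\partial\Omega_r=g_r(\{|z|=r\})$ is positively oriented, so for $w_0\notin\overline{\Omega_r}$ its winding number, and hence the topological degree $\deg(g_r,\mathbb{D}_r,w_0)$, vanishes. If one knows in addition that $g_r$ is sense-preserving, the degree equals the nonnegative count of preimages, so $w_0\notin g_r(\mathbb{D}_r)$; this yields $g_r(\mathbb{D}_r)\subseteq\overline{\Omega_r}$, and then $\subseteq\Omega_r$ by openness of sense-preserving harmonic maps. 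Thus the crux reduces to showing that the auxiliary harmonic maps are sense-preserving. After normalizing so that $f$ itself is sense-preserving, I would either verify $J_{g_r}>0$ directly, or run a continuity argument in $r$: for small $r$ the map $g_r$ is a small perturbation of $G_1$ and the containment is transparent, and I would show that the set of $r$ for which it holds is both open and closed in $(0,1)$.

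With the containment established, Kneser's theorem gives that each $g_r$ is univalent on $\mathbb{D}_r$ and maps it onto $\Omega_r$. I would then let $r\to1$: since $g_r(z)=\sum_k r^{2(k-1)}G_k(z)\to\sum_k G_k(z)=:G(z)$ locally uniformly, and each $g_r$ is univalent on every fixed $\mathbb{D}_\rho$ once $r>\rho$, the harmonic analogue of Hurwitz's theorem shows that $G$ is univalent (it is non-constant because the images $\Omega_r$ are unbounded). As the images $\Omega_r=g_r(\mathbb{D}_r)$ exhaust $\mathbb{C}$, a kernel-type limiting argument, in which the uniform growth $|f(z)|\to\infty$ at the boundary prevents preimages of a fixed point from escaping to $\partial\mathbb{D}$, identifies $G(\mathbb{D})$ with $\mathbb{C}$. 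Then $G$ is a univalent harmonic mapping of $\mathbb{D}$ onto $\mathbb{C}$, contradicting Rad\'o's theorem. I expect the two delicate points to be the sense-preserving property underlying the containment and the surjectivity of the limit map $G$, the former being the principal obstacle.
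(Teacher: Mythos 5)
Your overall skeleton --- reduce to the harmonic case via the auxiliary maps $g_r$, invoke Kneser's theorem on each $\mathbb{D}_r$, pass to the limit $r\to 1$, and contradict Rad\'o --- is exactly the paper's strategy (its Lemma~\ref{lem1} followed by Theorem~A). The step you correctly isolate as the heart of the matter, verifying Kneser's hypothesis $g_r(\mathbb{D}_r)\subseteq\Omega_r$, is also where the paper does its real work, but your proposed resolution has a genuine gap. The degree-theoretic argument needs $g_r$ to be sense-preserving, and this does not follow from the sense-preservation of $f$: writing $F_k=H_k+\overline{G_k}$, the Jacobian of $g_r$ is $\bigl|\sum_k r^{2(k-1)}H_k'\bigr|^2-\bigl|\sum_k r^{2(k-1)}G_k'\bigr|^2$, whereas $f_z$ and $f_{\overline z}$ carry the additional terms produced by differentiating $|z|^{2(k-1)}$, so $J_f>0$ gives no pointwise control on $J_{g_r}$, not even on the circle $|z|=r$ where the two maps agree. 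Your fallback, an open-and-closed argument in $r$, founders on closedness: as $r$ increases, the containment could a priori degenerate by $g_r(\mathbb{D}_r)$ touching $\partial\Omega_r$ from inside at a limiting parameter, and nothing in the proposal excludes this. So the ``principal obstacle'' you flag remains genuinely open, and it is not clear it can be closed along the route you chose.

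The paper sidesteps the issue by never verifying the containment directly. It applies Kneser's theorem in the contrapositive: if some $g_r$ were \emph{not} univalent, then the harmonic extension of the boundary homeomorphism $f|_{\partial\mathbb{D}_r}$ would have to send some interior point $\zeta\in\mathbb{D}_r$ to a value outside $\Omega_r=f(\mathbb{D}_r)$; this escaping value is then played off against the fact that the curves $f(\partial\mathbb{D}_\rho)$, $0<\rho<1$, are pairwise disjoint and foliate the image of the univalent map $f$, yielding a contradiction. That argument uses only the univalence of $f$ and the identity $g_\rho\equiv f$ on $|z|=\rho$, with no sense-preservation of $g_r$ required. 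To repair your version you would need either an independent proof that $J_{g_r}>0$, or to switch to this contrapositive use of Kneser; the rest of your outline (the Hurwitz-type limit and the surjectivity of $G=\sum_k G_k$ via the exhaustion $\Omega_r\nearrow\mathbb{C}$) matches the paper's limiting process and is sound.
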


\subsection*{{\bf II.} Boundary Behaviour of Biharmonic Mappings}
There are only few results concerning the boundary behaviour of polyharmonic mappings. As it is difficult to study the harmonic case, the polyharmonic case appears to be even more complicated. On the other hand, auxiliary harmonic mappings of the form \eqref{auxharm} coincide with the polyharmonic mapping of the form \eqref{form} on the circles $|z|=r<1$, which suggests that such mappings could be used in studying the boundary behavior of polyharmonic mappings.

Indeed, based on the fact that in some cases the boundary function  of a biharmonic (or more generally polyharmonic) mapping in $\mathbb D$ coincides with the boundary of an associated harmonic mapping there, we show that one can compare the behaviours of these functions to each other.  For this purpose, we define the boundary function $f^*$ of a univalent mapping $f$ to be the unrestricted limit
$$f^*(e^{it})=\lim_{z\rightarrow e^{it}}f(z),$$
if it exists.

 The classical Riemann Mapping Theorem concerns the existence of a normalized univalent analytic  function between two simply connected domains, say $\mathbb D$ and $\Omega$. If $\Omega$ is a bounded Jordan domain with a rectifiable boundary, the function extends to a univalent continuous function from $\overline{\mathbb D}$ onto $\overline{\Omega}.$ In the harmonic case, we seek a normalized harmonic univalent function $f(z)=h(z)+\overline{g(z)}$  with a specific dilatation $a(z)= g'(z)/h'(z)$ that is bounded by one.
 The existence of such function is guaranteed if $|a(z)|<k<1.$ However, for $|a(z)|<1,$ such a function``onto" $\Omega$ may not exist, and the best we can do is to talk about a generalized Riemann Mapping Theorem (GRM) that asks for  a univalent harmonic mapping with dilatation $a(z)$ bounded by one ``onto" $\Omega$ in the sense that $f(\partial\mathbb D)$ is mapped into $\partial \Omega$ and the mapping between those be sense-preserving and  one to one.

 A weak version of the GRM theorem for harmonic functions (\cite[Theorem 4.3]{HS} and  \cite[Theorem A]{BLW}) states that for a univalent harmonic mapping with dilatation $|a(z)|<1$ from $\mathbb{D}$ onto a (bounded)  rectifiable Jordan domain $\Omega$, its boundary function is continuous except for a countable number of jumps. A  jump at $e^{it}\in \partial \mathbb{D}$ happens whenever $f^*(e^{it-})= \esslim_{s\uparrow t} f^*(e^{is})$ and $f^*(e^{it+})= \esslim_{s\downarrow t} f^*(e^{is})$ exist, are finite and are not equal. In this case the cluster set of the harmonic function at $e^{it}$ is a straight line segment between
 $f^*(e^{it-})$ and $f^*(e^{it+}).$

The next result of Bshouty, Lyzzaik, and Weitsman \cite{BLW} about the behaviour of the boundary function of a harmonic mapping, is  useful for examining a more general setting, such as the biharmonic case.  One version of this result is the following.

\begin{Thm}\label{B-L-W} Let $f=h+\overline{g}$ be a univalent harmonic mapping from $\mathbb{D}$ into a bounded  rectifiable Jordan domain $\Omega.$ Then, the boundary function $f^*$ of $f$ is discontinuous at the point $e^{i\theta _0} \in \partial \mathbb{D}$ if, and only if
$$\lim_{r\to 1^-}(1-r)h'(re^{i\theta _0})=c,\;\; c\not =0.$$
\end{Thm}

In the following we show a result about the boundary function of a univalent polyharmonic mapping, defined in the unit disk. The next theorem holds true for univalent polyharmonic mappings though it is written for biharmonic univalent functions for simplicity.

\begin{thm} \label{main2}
 Let $f(z)=F_1(z)+|z|^2F_2(z)$ be a univalent biharmonic mapping of $\mathbb{D}$ onto a bounded and rectifiable Jordan domain $\Omega,$  where $F_1=H_1+\overline{G_1}$ and $F_2=H_2+\overline{G_2}$ are two harmonic mappings in $\mathbb{D}$. We assume that $F_2(z)=o\left(\frac 1{1-z}\right )$. Let $f^*(e^{it})$ denote the boundary function of $f(z).$ Then $f^*$ has a jump at $e^{i\theta_0}$ if, and only if
$$\lim_{r\to 1^{-}}(1-r)|H_1'(re^{i\theta_0})+H_2'(re^{i\theta_0})|=c>0.$$
\end{thm}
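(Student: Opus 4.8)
The plan is to reduce the biharmonic problem to the harmonic one by passing to the auxiliary harmonic mapping
$$g_1(z) := F_1(z)+F_2(z) = (H_1+H_2)+\overline{(G_1+G_2)},$$
whose analytic part has derivative $H_1'+H_2'$, exactly the quantity in the statement. The first step is to compare $f$ with $g_1$. Since $f(z)-g_1(z)=(|z|^2-1)F_2(z)$, we have $|f(z)-g_1(z)|=(1+|z|)(1-|z|)|F_2(z)|$, and the hypothesis $F_2(z)=o(1/(1-z))$ forces $(1-|z|)|F_2(z)|\to0$ as $|z|\to1^-$ (using $|1-z|\ge 1-|z|$). Hence $f-g_1\to0$ uniformly as $z$ approaches $\partial\mathbb{D}$, so $f$ and $g_1$ have the same unrestricted boundary limits wherever either exists, the same cluster sets, and therefore $f^*=g_1^*$. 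In particular $f^*$ jumps at $e^{i\theta_0}$ if and only if $g_1^*$ does, and the two jumps (chords of $\partial\Omega$) coincide.

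The second step is to apply Theorem \Ref{B-L-W} to $g_1$. Writing $H=H_1+H_2$ for its analytic part, that theorem gives: $g_1^*$ is discontinuous at $e^{i\theta_0}$ if and only if $\lim_{r\to1^-}(1-r)H'(re^{i\theta_0})=c\neq0$. Together with Step 1 this already yields the forward implication: a jump of $f^*$ forces the complex limit $\lim_{r\to1^-}(1-r)(H_1'+H_2')(re^{i\theta_0})$ to exist and equal a nonzero constant $c$, so the modulus limit equals $|c|>0$. For the converse I would use the finer information underlying Theorem \Ref{B-L-W}, namely that the weighted radial limit $\lim_{r\to1^-}(1-r)H'(re^{i\theta_0})$ exists at every boundary point (it vanishes at the full-measure set of continuity points and equals the nonzero chord constant at the jump points). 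Granting this, positivity of the modulus limit forces the complex limit to be nonzero, hence a jump, closing the equivalence.

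The main obstacle is the verification that Theorem \Ref{B-L-W} applies to $g_1$, that is, that $g_1$ is a \emph{univalent} harmonic mapping into a bounded rectifiable Jordan domain. Univalence of $g_1$ is not inherited from $f$ automatically, since the two differ in the interior by $(1-|z|^2)F_2$. To obtain it I would exploit the auxiliary mappings $g_r(z)=F_1(z)+r^2F_2(z)$ from \eqref{auxharm}, which agree with the univalent $f$ on each circle $|z|=r$; thus $g_r$ maps $\{|z|=r\}$ homeomorphically onto the rectifiable Jordan curve $\partial f(\mathbb{D}_r)\subset\Omega$. Applying Kneser's theorem (Theorem \Ref{Kneser}) on $\mathbb{D}_r$ then gives univalence of $g_r$ there, provided one first checks that the harmonic extension keeps $g_r(\mathbb{D}_r)$ inside $f(\mathbb{D}_r)$. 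Since $g_r-g_1=(r^2-1)F_2\to0$ locally uniformly on $\mathbb{D}$, a Hurwitz-type argument for harmonic mappings upgrades this to univalence of $g_1$ as $r\to1^-$, with $g_1(\mathbb{D})=\Omega$ because $g_1^*=f^*$ parametrizes $\partial\Omega$. The delicate point throughout is precisely the containment $g_r(\mathbb{D}_r)\subset f(\mathbb{D}_r)$ needed to invoke Kneser, where the possible non-convexity of $\Omega$ must be absorbed by the decay of $F_2$ furnished by the hypothesis $F_2=o(1/(1-z))$.
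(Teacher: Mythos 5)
Your proposal follows the paper's proof of Theorem \ref{main2} essentially verbatim in its architecture: the paper likewise passes to the auxiliary harmonic mapping $g(z)=F_1(z)+F_2(z)$, notes that $g^{*}(e^{it})-f^{*}(e^{it})=\lim_{z\to e^{it}}(1+|z|)(1-|z|)F_2(z)=0$ by the hypothesis $F_2(z)=o\left(\frac{1}{1-z}\right)$, and then applies Theorem \Ref{B-L-W} to $g$. The univalence of $g$, which you single out as the main obstacle and attack via Kneser's theorem on the $g_r$ plus a limiting argument, is exactly the content of Lemma \ref{lem1} (which the paper simply cites at this point); note, however, that in the paper the containment $g_r(\mathbb D_r)\subset f(\mathbb D_r)$ you flag as delicate is handled inside Lemma \ref{lem1} by a contradiction argument using the nesting of the level curves $f(\partial\mathbb D_\rho)$, and does not rely on the decay of $F_2$ as you suggest.
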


Next, we define a half-closed curve, contained in the unit disk. Denote by $\Gamma _{m,\theta _0}$ the curve, which is completely contained in $\mathbb{D}$, given by $$\Gamma_{m, \theta_0 }(\theta)=(1 -m|\theta -\theta _0|)e^{i\theta},$$ $e^{i\theta_0}\in \partial \mathbb{D}$, $m>0$ and $0<|\theta - \theta _0| \leq \min\{\pi, 1/m\}$. 

\begin{thm} \label{main4}
Let $\Omega$ be a bounded, convex, simply-connected domain and $f(z)=F_1(z)+|z|^2F_2(z)$ be a univalent biharmonic mapping of $\mathbb{D}$ onto $\Omega$. If 
$F_1=H_1+\overline{G_1}$ and $F_2=H_2+\overline{G_2}$ are two harmonic mappings in $\mathbb{D}$ with $F_2(z)=o\left(\frac 1{1-z}\right )$ and
$$| \phi(z)|:= \left| \frac{G'_1(z) + G'_2(z)}{H'_1(z)+H'_2(z)} \right| <1,$$
then
$$\int_{\Gamma _{m,\theta_0}} \frac{1-| \phi(z)|^2}{1-|z|^2}|dz|=\infty, \text{  for all  } 0<m<1/\pi,$$
implies that the boundary function of $f$ is continuous at $e^{i\theta_0}$.
\end{thm}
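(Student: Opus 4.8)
The plan is to argue by contraposition: assuming the boundary function $f^*$ is \emph{not} continuous at $e^{i\theta_0}$, I will produce a single $m\in(0,1/\pi)$ for which the integral is finite, contradicting the hypothesis. Since $\Omega$ is bounded and convex, its boundary is a rectifiable Jordan curve, so the hypotheses of Theorem \ref{main2} are met, and within that framework a discontinuity of $f^*$ at $e^{i\theta_0}$ is necessarily a jump. By Theorem \ref{main2} this jump is equivalent to
$$\lim_{r\to1^-}(1-r)\,|H_1'(re^{i\theta_0})+H_2'(re^{i\theta_0})|=c>0 .$$
Writing $H=H_1+H_2$, $G=G_1+G_2$, I work throughout with the effective harmonic mapping $\tilde f=H+\overline G$, which agrees with $f$ on $\partial\mathbb{D}$, has dilatation $\phi=G'/H'$, and satisfies $1-|\phi|^2=J_{\tilde f}/|H'|^2$ with $J_{\tilde f}=|H'|^2-|G'|^2>0$.

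First I record two facts that do not use the jump. Since $\Omega$ is convex, for each support line the boundary values of the harmonic function $\operatorname{Re}(e^{-i\gamma}\tilde f)$ lie below the support level, so the maximum principle gives $\tilde f(\mathbb{D})\subseteq\overline\Omega$; as $\tilde f$ is univalent (Theorem \Ref{Kneser}, its boundary values running monotonically once around $\partial\Omega$), we get $\int_{\mathbb{D}}J_{\tilde f}\,dA=\area(\Omega)<\infty$. Next, writing $P=H+G$ and $Q=H-G$, one has $J_{\tilde f}=\operatorname{Re}(P'\overline{Q'})>0$, so $P'$ and $Q'$ enclose an angle less than $\pi/2$, whence $|P'+Q'|^2\ge|Q'|^2$ and therefore $|H'|\ge\tfrac12|Q'|$ at every point. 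This cheap lower bound on $|H'|$ is exactly what lets me avoid tracking $P'$.

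The geometric heart of the proof is to promote the \emph{radial} blow‑up of $H'$ to two‑sided control of $|H'|$ throughout an approach region. At a jump the cluster set of $f$ at $e^{i\theta_0}$ is a straight segment of $\partial\Omega$ of length $\ell$; choosing coordinates so that this segment is vertical and $\Omega$ lies to its left, $\tilde f$ is convex in the horizontal direction and the Clunie--Sheil-Small shear (\cite{CS}) identifies $Q=H-G$ as a conformal map onto a horizontally convex domain. The jump in $\operatorname{Im}\tilde f=\operatorname{Im}Q$ then forces $Q(\mathbb{D})$ to develop a half-strip end of width $\ell$ over $e^{i\theta_0}$, so that $Q'(z)\sim\kappa/(e^{i\theta_0}-z)$ with $|\kappa|=\ell/\pi$, hence $|Q'(z)|\asymp1/|e^{i\theta_0}-z|$ throughout a neighborhood of $e^{i\theta_0}$. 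Combined with $|H'|\ge\tfrac12|Q'|$ this yields $|H'(z)|\gtrsim c'/|e^{i\theta_0}-z|$ on a tent region $R$ at $e^{i\theta_0}$, and consequently
$$1-|\phi(z)|^2=\frac{J_{\tilde f}(z)}{|H'(z)|^2}\ \lesssim\ J_{\tilde f}(z)\,|e^{i\theta_0}-z|^2 .$$
I expect this upgrade---transferring the purely radial conclusion of Theorem \ref{main2} to a genuinely two-dimensional (in particular \emph{tangential}) estimate through the shear---to be the main obstacle, since a radial lower bound alone cannot control $|H'|$ along the nearly tangential parts of the curves $\Gamma_{m,\theta_0}$.

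Finally I average over the family $\{\Gamma_{m,\theta_0}\}$. Parametrising $\Gamma_{m,\theta_0}$ by $s=|\theta-\theta_0|$ gives $1-|z|=ms$, $1-|z|^2\approx2ms$, $|e^{i\theta_0}-z|^2\approx s^2(1+m^2)$ and $|dz|\approx\sqrt{1+m^2}\,ds$, so along $\Gamma_{m,\theta_0}$ the displayed bound yields
$$\frac{1-|\phi|^2}{1-|z|^2}\,|dz|\ \lesssim\ \frac{(1+m^2)^{3/2}}{2m}\,J_{\tilde f}\,s\,ds .$$
The tent $R$ is swept out bijectively by $(s,m)\in(0,\delta)\times(0,1/\pi)$ with area element $dA\approx s\,ds\,dm$, so Fubini gives
$$\int_0^{1/\pi}\!\Big(\int_0^\delta J_{\tilde f}(\Gamma_{m,\theta_0}(s))\,s\,ds\Big)dm=\iint_R J_{\tilde f}\,dA\le\int_{\mathbb{D}}J_{\tilde f}\,dA<\infty .$$
Hence $\int_0^\delta J_{\tilde f}(\Gamma_{m,\theta_0}(s))\,s\,ds<\infty$ for almost every $m\in(0,1/\pi)$, and for any such $m$ (say one in $(1/2\pi,1/\pi)$, where the factor $1/m$ is harmless) the pointwise bound makes $\int_{\Gamma_{m,\theta_0}}\frac{1-|\phi|^2}{1-|z|^2}|dz|$ finite. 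This contradicts the assumed divergence for \emph{all} $m\in(0,1/\pi)$, so no jump can occur and $f^*$ is continuous at $e^{i\theta_0}$.
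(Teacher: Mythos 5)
Your argument starts with the same reduction the paper makes: pass to the associated harmonic mapping $g=F_1+F_2=H+\overline{G}$, which by Lemma \ref{lem1} is univalent onto $\Omega$, has dilatation $\phi$, and (because $F_2(z)=o\left(\frac1{1-z}\right)$) has the same boundary function as $f$. But the paper then finishes in one line by citing \cite[Theorem 3]{BCER}, which is precisely the statement that divergence of the integral along every $\Gamma_{m,\theta_0}$ forces continuity of the boundary function of a univalent harmonic map onto a convex domain. What you have written is an attempt to reprove that cited result from scratch, and the attempt is incomplete.

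The gap is the step you yourself flag as ``the main obstacle'': upgrading the purely radial information supplied by Theorem \ref{main2} (namely $\lim_{r\to 1^-}(1-r)|H'(re^{i\theta_0})|=c>0$) to the two-sided lower bound $|H'(z)|\gtrsim |e^{i\theta_0}-z|^{-1}$ on a full tent region containing the very tangential curves $\Gamma_{m,\theta_0}$. You assert this through the chain ``jump implies a half-strip end of $Q(\mathbb{D})$, hence $Q'(z)\sim \kappa/(e^{i\theta_0}-z)$ throughout a neighborhood,'' but none of these implications is proved: you do not show that $Q=H-G$ is conformal onto a domain convex in the horizontal direction in this setting, nor that the relevant prime end is genuinely half-strip-like rather than degenerate, and even granting a half-strip end the tangential asymptotics of $Q'$ are exactly the delicate point. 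Koebe distortion gives $|Q'(z)|\asymp \operatorname{dist}\left(Q(z),\partial Q(\mathbb{D})\right)/(1-|z|)$, and on $\Gamma_{m,\theta_0}$ one has $1-|z|=ms$, which for small $m$ is much smaller than $|e^{i\theta_0}-z|\approx s$; so the desired lower bound requires $Q(z)$ to stay a definite distance from $\partial Q(\mathbb{D})$ along these tangential curves, which you do not establish. Everything downstream --- the bound $1-|\phi|^2\lesssim J_{\tilde f}\,|e^{i\theta_0}-z|^2$, the finiteness of $\int_{\mathbb{D}}J_{\tilde f}\,dA$, and the Fubini averaging over $m$, all of which are correct as computations --- feeds on this unproved estimate, so the contradiction is never actually reached.
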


\subsection*{{\bf III.} Close-to-Convex Biharmonic Mappings}
In the last part of this paper, we are focused on univalent close-to-convex biharmonic mappings. We show how such a mapping can be constructed by using a member of the class $K_H$ under certain conditions.

Lemma \ref{lem1} formulated in Section \ref{mainproofs} can be used to construct a univalent biharmonic mapping $f=F_1+|z|^2 F_2$ from a univalent harmonic mapping $g=F_1+F_2$ restricted to two conditions:
\begin{itemize}
\item[(i)]
For every $0<r<1,$ the harmonic function $g_r(z)$ is univalent, and
\item[(ii)]
\[
|a_f|= \Big|\frac{f_{\overline z}}{f_z}\Big|<1.
\]
\end{itemize}
We introduce a method to produce univalent fully close-to-convex biharmonic mappings from convex harmonic mappings, with merely
condition (ii).

\begin{thm}\label{main3}
Let $H$ and $G$ be analytic functions and $F=H+\overline{G}\in K_H$ be a convex harmonic mapping in $\mathbb{D}$. If  $f(z)=H(z)+|z|^2 \overline{G(z)}$  is locally univalent, then $f$ is a univalent fully close-to-convex biharmonic mapping in $\mathbb{D}$.
\end{thm}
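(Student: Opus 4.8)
The plan is to recognise $f$ as the biharmonic mapping produced by Lemma~\ref{lem1} out of the convex harmonic mapping $F$, and then to promote the resulting univalence to full close-to-convexity. Writing $f(z)=F_1(z)+|z|^2F_2(z)$ with $F_1=H$ and $F_2=\overline G$, the auxiliary harmonic mapping of the form \eqref{auxharm} is $g=F_1+F_2=H+\overline G=F\in K_H$, which is in particular univalent. The hypothesis that $f$ is locally univalent, together with the normalisation $F_z(0)=H'(0)=1$ (so that $a_f(0)=0$), forces $|a_f|=|f_{\overline z}/f_z|<1$ throughout $\mathbb D$; this is condition~(ii). Hence the only hypothesis of Lemma~\ref{lem1} left to verify is condition~(i): that $g_r(z)=F_1(z)+r^2F_2(z)=H(z)+r^2\overline{G(z)}$ is univalent for every $0<r<1$.

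To obtain condition~(i) I would use the convexity of $F$ through the classical fact that scaling the co-analytic part of a convex harmonic mapping preserves close-to-convexity: if $H+\overline G\in K_H$, then $H+t\overline G$ is close-to-convex---hence univalent and, since its dilatation $t\,G'/H'$ still has modulus below one, sense-preserving---for every $t\in[0,1]$. Taking $t=r^2$ gives condition~(i), and with (i) and (ii) in hand Lemma~\ref{lem1} yields that $f$ is a univalent biharmonic mapping.

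It remains to show that $f$ is fully close-to-convex, i.e.\ that $f(\mathbb D_r)$ is close-to-convex for every $0<r\le 1$. The bridge is that $f$ and $g_r$ coincide on the circle $|z|=r$, where $f(z)=H(z)+r^2\overline{G(z)}=g_r(z)$; hence the Jordan curves $f(\partial\mathbb D_r)$ and $g_r(\partial\mathbb D_r)$ coincide, and since both $f$ and $g_r$ are univalent their Jordan interiors coincide, giving $f(\mathbb D_r)=g_r(\mathbb D_r)$. Taking $r\to1^-$ will then yield the close-to-convexity of $f(\mathbb D)$ itself, so the whole matter reduces to showing that each $g_r$ maps the subdisk $\mathbb D_r$ onto a close-to-convex domain.

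This is the main obstacle. Although the previous step already shows that $g_r$ maps the full disk $\mathbb D$ onto a close-to-convex domain, close-to-convexity of harmonic mappings is not a hereditary property, so this alone does not make $g_r(\mathbb D_r)$ close-to-convex. To close the gap I would prove that each $g_r$ is in fact fully close-to-convex by working with its shear: passing to the analytic function $H-r^2G$ and exploiting that, as $F$ is convex, $F$ is convex in every direction, I would establish a Kaplan-type inequality ensuring that along every boundary curve $g_r(\partial\mathbb D_\rho)$, $\rho\le r$, the argument of the tangent vector never turns clockwise by as much as $\pi$. Proving this inequality uniformly in $r$---that the all-directional convexity of $F$ is inherited as close-to-convexity of every $g_r$ on every subdisk---is the technical core of the argument, whereas the appeal to Lemma~\ref{lem1} and the identification $f(\mathbb D_r)=g_r(\mathbb D_r)$ are routine once condition~(i) is secured.
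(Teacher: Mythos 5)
Your overall architecture is the same as the paper's: reduce to the auxiliary harmonic mappings $g_r(z)=H(z)+r^2\overline{G(z)}$, get univalence of $f$ from Lemma~\ref{lem1}, and then try to upgrade to full close-to-convexity by using $f(\partial\mathbb D_r)=g_r(\partial\mathbb D_r)$. The first half of your argument is fine: $H+t\overline G$ close-to-convex for $t\in[0,1]$ follows from Clunie--Sheil-Small (Theorems~\Ref{Thm5.7} and~\Ref{Lem5.15} applied to the pair $H$, $tG$), which gives condition~(i) and hence univalence of $f$. But you then correctly identify the real difficulty --- that harmonic close-to-convexity is not hereditary, so knowing $g_r(\mathbb D)$ is close-to-convex says nothing about $g_r(\mathbb D_r)$ --- and at exactly that point you stop proving and start promising: ``I would establish a Kaplan-type inequality\dots Proving this inequality uniformly in $r$ is the technical core.'' That core is never supplied, so the proposal has a genuine gap precisely at the step that makes the theorem nontrivial.

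The paper closes this gap without any Kaplan-type computation, by routing the hereditary property through the \emph{analytic} functions rather than the harmonic ones. Since $H+\varepsilon G$ is an analytic close-to-convex function for all $|\varepsilon|\le 1$ (Theorem~\Ref{Thm5.7}), and close-to-convexity \emph{is} hereditary for analytic functions, $H(\rho z)+\varepsilon G(\rho z)$ is close-to-convex in $\mathbb D$ for every $0<\rho<1$. Choosing $\varepsilon=\rho^2\eta$ with $|\eta|=1$ (legitimate because $|\rho^2\eta|\le 1$) and applying Theorem~\Ref{Lem5.15} to the pair $h(z)=H(\rho z)$, $g(z)=\rho^2G(\rho z)$ shows that $H(\rho z)+\rho^2\overline{G(\rho z)}$ is a univalent close-to-convex harmonic mapping of $\mathbb D$, i.e.\ $g_\rho$ restricted to $\mathbb D_\rho$ has close-to-convex image. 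This is exactly the statement you flagged as the obstacle, obtained by a one-line substitution rather than by an argument about convexity in every direction. If you want to salvage your version, you should either carry out the shear/Kaplan estimate in full (which is considerably harder and, as far as one can tell, not obviously uniform in $r$), or replace it with the $\varepsilon=\rho^2\eta$ device above. A smaller point: your claim that local univalence plus $a_f(0)=0$ ``forces'' $|a_f|<1$ deserves a sentence (continuity of $a_f$ off the zero set of $f_z$ and connectedness), though this is peripheral to the main gap.
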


\section{Proofs of the Main Results}\label{mainproofs}

In this section, we give the proofs of the main results. We start with the following lemma, which is of independent interest.

\begin{lem}\label{lem1}
A locally univalent sense-preserving polyharmonic mapping on the unit disk $\mathbb D,$
$$f(z)= \sum_{k=1}^p|z|^{2(k-1)} F_k(z),$$
where $F_j, j=1,\ldots,p$ are harmonic mappings, is one-to-one, if, and only if, for every $r,\; 0<r<1$ the harmonic function
$$ g_r(z) = \sum_{k=1}^p r^{2(k-1)} F_k(z),$$
is univalent harmonic mapping in $\mathbb D_r.$
\end{lem}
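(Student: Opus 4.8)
The plan is to exploit the single structural identity $f(z)=g_{|z|}(z)$; equivalently, on each circle $C_r=\{|z|=r\}$ one has $|z|^{2(k-1)}=r^{2(k-1)}$, so that $f$ and $g_r$ coincide as parametrized curves on $C_r$. Since $f$ is univalent on $\mathbb{D}$ precisely when it is injective on every $\mathbb{D}_r$ (any two points lie in a common subdisk), and since each $g_r$ is smooth on $\overline{\mathbb{D}_r}$ for $r<1$, I would fix $r$ and compare the two maps through the topological degree. The sense-preserving hypothesis gives $J_f>0$, so $f$ is open and, by the argument principle, for every $w\notin f(C_r)$ the winding number of $f(C_r)$ about $w$ equals the number $n_f(w)$ of solutions of $f(z)=w$ in $\mathbb{D}_r$. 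Because $f(C_r)=g_r(C_r)$ with the same orientation, this winding number is simultaneously $\deg(g_r,\mathbb{D}_r,w)$. This identity $n_f(w)=\deg(g_r,\mathbb{D}_r,w)$ is the hinge of both implications.

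For the implication that univalence of all $g_r$ forces univalence of $f$, suppose each $g_r$ is univalent. By Lewy's theorem a univalent harmonic map has nonvanishing Jacobian, so $J_{g_r}$ has constant sign on $\mathbb{D}_r$. The sense-reversing case is impossible: it would give $\deg(g_r,\mathbb{D}_r,w)\le 0$, hence $n_f(w)=0$ for all $w\notin f(C_r)$, i.e. the open set $f(\mathbb{D}_r)$ would lie inside the curve $f(C_r)$, a contradiction. Thus $g_r$ is sense-preserving and $\deg(g_r,\mathbb{D}_r,w)$ equals its number of preimages, which is $0$ or $1$ by injectivity. Therefore $n_f(w)\le 1$ off $f(C_r)$, so $f$ is injective on $\mathbb{D}_r$; as this holds for every $r<1$, $f$ is univalent on $\mathbb{D}$.

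For the converse I would use Kneser's theorem. If $f$ is univalent then it is a homeomorphism of $\overline{\mathbb{D}_r}$ onto $\overline{\Omega_r}$, where $\Omega_r:=f(\mathbb{D}_r)$, so $f|_{C_r}$ is injective and $\Gamma:=f(C_r)=g_r(C_r)$ is a positively oriented Jordan curve bounding the simply connected Jordan domain $\Omega_r$. Since $g_r$ is harmonic and its boundary values are the homeomorphism $f|_{C_r}\colon C_r\to\partial\Omega_r$, it is exactly the harmonic extension of that boundary homeomorphism, and Kneser's theorem yields that $g_r$ is univalent on $\mathbb{D}_r$, provided its hypothesis $g_r(\mathbb{D}_r)\subseteq\Omega_r$ holds.

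Verifying this containment is the step I expect to be the main obstacle. The degree computation only gives $\deg(g_r,\mathbb{D}_r,w)=0$ for $w$ in the exterior of $\overline{\Omega_r}$, and for a harmonic (hence possibly folding) map this does not by itself exclude such $w$ from the image. The natural tool is the maximum principle: for every direction $\alpha$ the function $\real(e^{-i\alpha}g_r)$ is harmonic and attains its extrema on $C_r$, whence $g_r(\overline{\mathbb{D}_r})\subseteq\co\,\Omega_r$. When $\Omega_r$ is convex this closed convex hull is just $\overline{\Omega_r}$ and the containment---and with it Kneser's theorem---follows immediately. For a general, non-convex $\Omega_r$ one must upgrade this to $g_r(\mathbb{D}_r)\subseteq\Omega_r$, and here the full force of the hypothesis, namely that $f$ is univalent on all of $\mathbb{D}$ and not merely on $\mathbb{D}_r$, should be brought to bear; pinning down this last containment is, to my mind, the delicate heart of the argument.
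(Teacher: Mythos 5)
Your degree-theoretic argument for the implication ``all $g_r$ univalent $\Rightarrow$ $f$ univalent'' is sound, and it is essentially the paper's argument for that direction executed more cleanly: the paper also compares the winding number of the image of a circle about a doubly covered value with the injectivity of an auxiliary harmonic map, but it does so by passing to a limit map $g_1$ and counting the change of argument on $\partial\mathbb{D}$, whereas you work at a fixed radius $r<1$, which avoids the boundary and Hurwitz-type limiting issues entirely. (The only loose end there, a repeated value $w$ lying on $f(C_r)$, is disposed of exactly as you indicate, by perturbing $r$, since $f^{-1}(w)$ is discrete for a locally univalent $f$.)

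The genuine gap is the one you flag yourself: the containment $g_r(\mathbb{D}_r)\subseteq\Omega_r$ required by Kneser's theorem, which your argument delivers only when $\Omega_r$ is convex (via the maximum principle and the convex hull). The paper does not verify this containment either; it runs Kneser in the contrapositive. Namely, if some $g_r$ is not univalent, Theorem B produces a point $\zeta\in\mathbb{D}_r$ with $g_r(\zeta)\notin\Omega_r=f(\mathbb{D}_r)$, and the paper then contradicts the fact that the Jordan curves $f(\partial\mathbb{D}_\rho)$, $0<\rho<1$, are pairwise disjoint and nested: it places $g_r(\zeta)$ on $f(\partial\mathbb{D}_\rho)$ for some $\rho>r$ (because $g_r(\zeta)$ lies outside $f(\mathbb{D}_r)$) and simultaneously on $f(\partial\mathbb{D}_{|\zeta|})$ with $|\zeta|<r$. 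This nesting of the level curves of $f$ is precisely the ``full force of global univalence'' you anticipated would be needed. Be aware, though, that the paper's two memberships are themselves delicate: the second does not follow from the identity $f\equiv g_r$ on $\partial\mathbb{D}_r$ alone, since for $|\zeta|<r$ one has $g_r(\zeta)\neq f(\zeta)$ in general, and the first presupposes $g_r(\zeta)\in f(\mathbb{D})$, which is not automatic for a non-convex image. So your diagnosis that this containment is the delicate heart of the lemma is accurate, but your proposal, as written, does not close it outside the convex case, and hence proves only one direction of the equivalence in full generality.
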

\begin{proof}
Assume that $f(z)$ is univalent in $\mathbb D$ and set $f_r(z)=f(z)|_{\mathbb D_r}.$ One immediately observes that for $r,\;0<r<1,$
$f_r(\mathbb D_r)$ is a chain of outward open domains $\Omega_r,$ whose boundary functions $f^*_r$  are univalent and disconnected, {\it i.e.} for $0<r<R<1,$
\begin{equation}\label{lem1.1}
  f^*_r(\partial\mathbb D_r)\cap f^*_R(\partial\mathbb D_R)=\emptyset.
\end{equation}
The more so
\begin{equation}\label{lem1.2}
f_r(\partial \mathbb D_r)\equiv g_r(\partial \mathbb D_r).
\end{equation}

Assume to the contrary that for some  $ 0<r<1, \;g_r(z)$ is not univalent in $\mathbb D_r.$ Since $g_r$ is a harmonic mapping univalent on $\partial\mathbb D_r,$ by Theorem B, at some interior point,
$\zeta\in \mathbb D_r,\;\; g_r(\zeta)\not\in f_r(\mathbb D_r).$  As such, there exists $\rho>r$ such that $g_r(\zeta)\in f_\rho(\partial\mathbb D_\rho).$ On the other hand, $|\zeta|=\varrho< r,$ and by (\ref{lem1.2}) $g_r(\zeta)\in f_\varrho(\partial\mathbb D_\varrho).$ This contradicts (\ref{lem1.1}).

The other direction is immediate.  Assume to the contrary that for some $0<\rho <1,$ $f_\rho$ is not univalent. Then there exists a point $\omega_0$ such that the total change of the argument of  $f_\rho(z)-\omega_0 $ around $f_\rho (\partial \mathbb D_{\rho})$ is
$$\frac1{2\pi}\Lambda_{\partial \mathbb D_{\rho}}\arg (f_\rho(z)-\omega_0)\ge 2, $$
and by the inclusion property, the total change of the argument of $f(z)-\omega_0$ around $f (\partial \mathbb D)$ is greater or equal to 2.
Each $g_r, \;0<r<1$ is sense preserving and univalent  in $\mathbb D_r.$  A limiting process as $r$ tends to one shows that $g_1(z)$ is univalent in $\mathbb D.$ But $g_1(z)\equiv f(z)$ for all $z\in\partial \mathbb D$ and, hence,
$$ 1=\frac1{2\pi}\Lambda_{\partial \mathbb D}\arg (g(z)-\omega_0)=\frac1{2\pi}\Lambda_{\partial \mathbb D}\arg (f(z)-\omega_0)\geq 2,$$
which leads to a contradiction.
\end{proof}

\subsection*{Proof of Theorem \ref{main1}}
 Assume that there exists a univalent polyharmonic function $f$ from the unit disk onto $\mathbb C. $ Then  by Lemma \ref{lem1}, $g_1(z)$ is a univalent harmonic mapping that maps $\mathbb D$ onto $\mathbb  C.$ By Theorem A, such a mapping does not exist, and we get a contradiction.
 \qed

\subsection*{Proof of Theorem \ref{main2}}
By Lemma \ref{lem1}, we conclude that $g(z)= F_1(z)+F_2(z)$ is a univalent harmonic mapping in $\mathbb D.$ Let $g^*(e^{it})$ denote the boundary of $g(z),$ then by Theorem C, a jump at $g^*( e^{i\theta_0})$ occurs if, and only if,
$$\lim_{r\to 1^{-}}(1-r)\big|H'_1(re^{i\theta_0})+H'_2(re^{i\theta_0})\big|=c>0.$$
On the other hand
\begin{eqnarray}
g^*(e^{it})-f^*(e^{it})&=& \lim_{z \rightarrow e^{it}}\left ( F_1(z)+ F_2(z)\right )-\lim_{z \rightarrow e^{it}}\left ( F_1(z)+|z|^2 F_2(z)\right )\nonumber \\
 &=&\lim_{z \rightarrow e^{it}} \big(1+|z|)(1-|z|)F_2(z)\big )\nonumber \\
 &=&0,\nonumber
\end{eqnarray}
and the result follows.
\qed



\begin{rem}
Under the assumptions of Theorem \ref{main2} above, observe that if both $F_1$ and $F_2$ have continuous boundary functions at $e^{i\theta_0}$, then $f^*$ is also continuous at the same point. On the other hand, if one of $F^*_1$, $F^*_2$ is continuous and the other one has a jump at $e^{i\theta_0}$, then $f^*$ must have a jump.
\end{rem}

\subsection*{Proof of Theorem \ref{main4}}
Consider the harmonic mapping $g(z)=F_1(z)+F_2(z)$ in the unit disk, and denote by $g^*$ its boundary function. By Lemma \ref{lem1}, and the proof of Theorem \ref{main2}, we deduce that $g$ maps $\mathbb{D}$ univalently onto $\Omega$, with the dilatation $a_g=\phi$.

Hence, by \cite[Theorem 3]{BCER}, we have that if 
$$\int_{\Gamma _{m,\theta_0}} \frac{1-| a_g(z)|^2}{1-|z|^2}|dz|=\infty, \text{  for all  } 0<m<1/\pi,$$ 
then $g^*$ is continuous at the point $e^{i\theta}$. As the boundary functions of $f$ and $g$ coincide, the same holds for $f^*$.
\qed

To prove Theorem \ref{main3}, we shall need the following theorems due to Clunie and Sheil-Small in \cite{CS}.

\begin{Thm} \label{Thm5.7}
If $f=h+\overline{g} \in K_H$, then the functions $h+\varepsilon g$ are close-to-convex for $| \varepsilon | \leq 1$.
\end{Thm}

\begin{Thm}\label{Lem5.15}
Let $h$ and $g$ be analytic in $\mathbb{D}$ with $|g'(0)|<|h'(0)|$ and $h+\varepsilon g $ be close-to-convex for each
$\varepsilon$; $|\varepsilon|=1.$ Then $h+\overline g $ is harmonic close-to-convex function.
\end{Thm}

\subsection*{Proof of Theorem \ref{main3}}
Let the harmonic mapping $F=H+\overline{G}\in K_H$. Theorem~\Ref{Thm5.7} implies that $H+\varepsilon G$ is an analytic close-to-convex function in $\mathbb{D}$ for all $| \varepsilon | \leq 1$. Hence, $$H(z)+\varepsilon G(z)$$ maps conformally the unit disk onto a close-to-convex domain and, by the hereditary property of close-to-convex analytic mappings, it follows that
$$H(\rho z)+ \varepsilon G(\rho z)$$
 is close-to-convex for all $0<\rho <1$, $z\in \mathbb{D}_{1/\rho}$. Let $\varepsilon = \rho^2 \eta$ for $0<\rho <1$ and $| \eta |=1$. Then,
$$H(\rho z)+ \eta \rho^2 G(\rho z)$$
is close-to-convex in $\mathbb{D}$ for all $| \eta |=1$ and Theorem \Ref{Lem5.15} implies that
$$H(\rho z)+\rho^2 \overline{G(\rho z)}$$
is a univalent close-to-convex harmonic mapping in $\mathbb{D}$. Written otherwise,
$$g_\rho(z)=H(z)+\rho^2\overline{G(z)}$$
is a univalent close-to-convex harmonic mapping in $\mathbb D_\rho$.
By Lemma \ref{lem1} and the construction of $g_\rho$, we conclude that $f(z)=H(z)+|z|^2 \overline{G(z)}$ is a univalent fully close-to-convex biharmonic mapping in $\mathbb{D}$.
\qed

\section{Example}

In is section, we give a simple example illustrating applications of Theorem  \ref{main3}.

\begin{example}\label{ex3}
Consider the function $F(z)=z-\frac{\overline{z}^2}{6}$, defined in the unit disk.
Because
$$\sum_{n=2}^\infty n^2 |a_n|+ \sum_{n=1}^\infty n^2 |b_n| =2/3 <1,$$
where $a_n$, $b_n$ are the Taylor coefficients of the analytic and the anti-analytic parts of $F$ respectively,
by  \cite[Theorem 3]{JS}, it follows that $F\in K_H$. 

The dilatation of the biharmonic function 
$$f(z)=z-|z|^2\frac{\overline{z}^2}{6}$$
is 
\[
a_f(z)=-\frac{3z^2\overline{z}}{6-\overline{z}^3},
\]
which is bounded by one, and $f_z(z)\not =0$ in $\mathbb D $, so that 
$f(z)$ is locally univalent. Thus, by Theorem \ref{main3}, $f(z) $
is fully close-to-convex in the unit disk. See Figure \ref{fig1}.
\end{example}

\begin{figure}[ht]

\includegraphics[width=4.7cm]{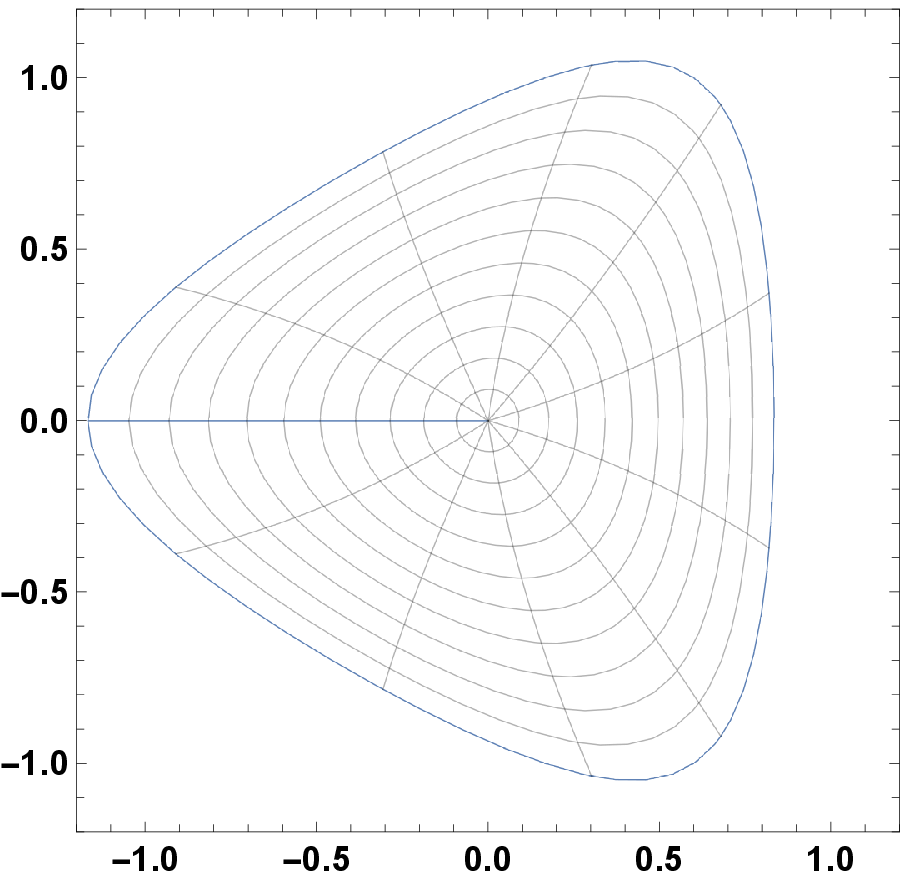}
\includegraphics[width=4.7cm]{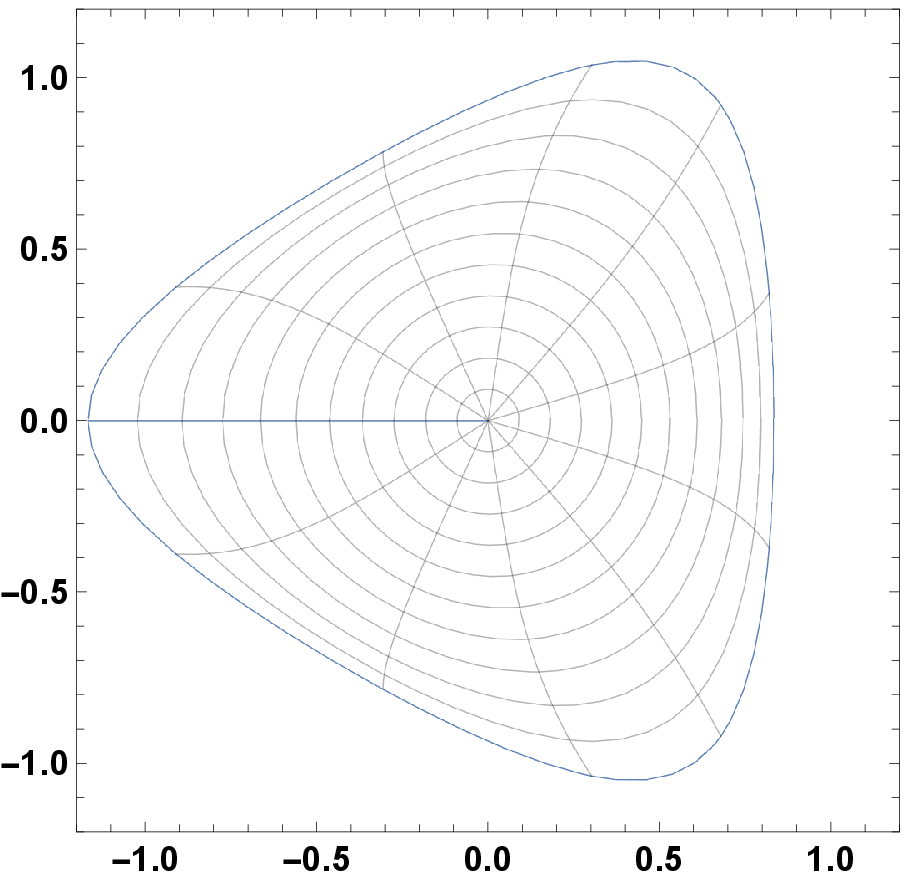}
\includegraphics[width=4.7cm]{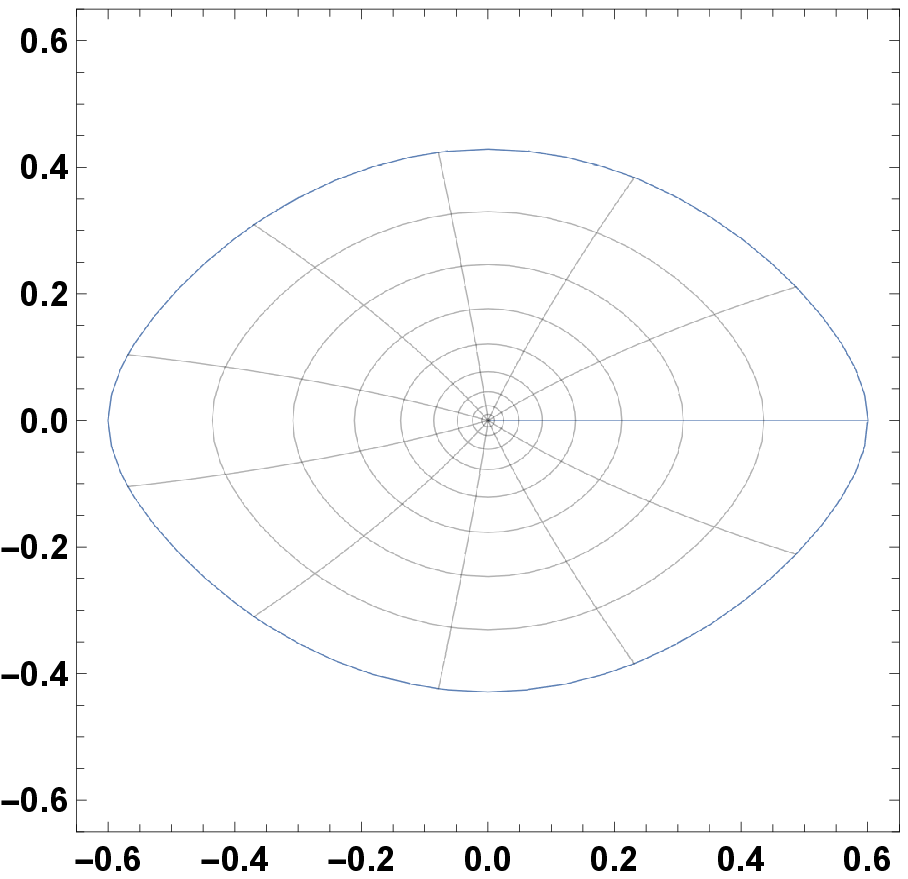}

\caption{Images of the unit disk under the functions $F$, $f$, and $a_f$ of Example \ref{ex3}.}\label{fig1}
\end{figure}


\begin{thebibliography}{99}

\bibitem{B} {\sc L. Bers}: Isolated singularities of minimal surfaces, \textit{Ann of Math.} {\bf 53} (1951), 364--386.

\bibitem{BCER}
{\sc D. Bshouty, J. Chen, S. Evdoridis}, and {\sc A. Rasila}:
{Koebe and Carathe\'odory type boundary behavior results for harmonic mappings}. In press, {\it Complex Var. Elliptic Equ.}.  DOI: 10.1080/17476933.2020.1851212.

\bibitem{BLW} {\sc D. Bshouty, A. Lyzzaik}, and {\sc A. Weitsman}: On the boundary behavior of univalent harmonic mappings, \textit{Ann. Acad. Sci. Fenn.} {\bf 37}  (2012), 135--147.

\bibitem{CPW} {\sc S. Chen, S. Ponnusamy}, and {\sc X. Wang}: Bloch constant and Landau’s theorem for planar $p$-harmonic mappings, \textit{J. Math. Anal. Appl.} {\bf 373} (2011), 102--110.
    
\bibitem{CRW1} {\sc J. Chen, A. Rasila}, and {\sc X. Wang}: Starlikeness and convexity of polyharmonic mappings, \textit{Bull. Belg. Math. Soc. Simon Stevin} {\bf 21} (2014), 67--82 .

\bibitem{CRW2} {\sc J. Chen, A. Rasila}, and {\sc X. Wang}: On polyharmonic univalent mappings, \textit{Math. Rep.} {\bf 15} (2013), 343--357.

\bibitem{CRW3}
{\sc J. Chen, A. Rasila}, and {\sc X. Wang}:
{Landau's theorem for polyharmonic mappings},
{\it J. Math. Anal. Appl.} {\bf 409} (2014) (2), 934--945.


\bibitem{CRW4}
{\sc J. Chen, A. Rasila}, and {\sc X. Wang}:
{Coefficient estimates and radii problems for certain classes of polyharmonic mappings}, 
{\it Complex Var. Elliptic Equ.} {\bf 60} (2015), 354--371.

\bibitem{CRW5}
{\sc J. Chen, A. Rasila}, and {\sc X. Wang}:
{ On lengths, areas and Lipschitz continuity of polyharmonic mappings},
{\it J. Math. Anal. Appl.} {\bf 422} (2015) (2), 1196--1212.

\bibitem{Cho} {\sc G. Choquet}: Sur un type de transformation analytique g\'en\'eralisant la repr\'esentation conforme et d\'efinie au moyen de fonctions harmoniques, \textit{Bull. Sco. Math.}(2) \textbf{69} (1945), 156--165.

\bibitem{CS} {\sc J. Clunie} and {\sc T. Sheil-Small}: Harmonic univalent functions, \textit{Ann. Acad. Sci. Fenn. Ser. A. I} {\bf 9} (1984), 3--25.

\bibitem{D} {\sc P. Duren}: \textit{Harmonic Mappings in the Plane}, Cambridge University Press, Cambridge, 2004.

\bibitem{HB} {\sc J. Happel} and {\sc H. Brenner}: \textit{Low Reynolds Number Hydrodynamics with Special Applications to Particulate Media},  Prentice-Hall, Englewood Cliffs, NJ, USA, 1965.

\bibitem{HS} {\sc W. Hengartner} and {\sc G. Schober}: Harmonic mappings with given dilatation, \textit{ J. London Math. Soc.} {\bf 33} (1986), 473--483.

\bibitem{JS} {\sc J. M. Jahangiri} and {\sc H. Silverman}: Harmonic close-to-convex mappings, \textit{J. Appl. Math. Stoch. Anal.} {\bf 15} (2002), 23--28.

\bibitem{Kh} {\sc S.A.  Khuri}: Biorthogonal  series  solution  of  Stokes  flow  problems  in  sectorial  regions, \textit{SIAM J. Appl. Math.} {\bf 56} (1996), 19--39.

\bibitem{Kn} {\sc H. Kneser}: L\"osung der Aufgabe 41, \textit{Jahresber. Deutsch. Math.-Verein.} {\bf 35} (1926), 123--124.

\bibitem{L} {\sc W.E. Langlois}: \textit{Slow Viscous Flow}, Macmillan, New York, NY, USA, 1964.

\bibitem{N} {\sc G. Nitsche}: On harmonic mappings, \textit{Proc. Amer. Math. Soc.} {\bf 9} (1958),  268--271.

\bibitem{PR} {\sc S. Ponnusamy} and {\sc A. Rasila}: Planar Harmonic and Quasiregular Mappings, \textit{Topics in Modern Function Theory: Chapter in CMFT, RMS-Lecture Notes Series} {\bf 19} (2013), 267--333.

\bibitem{QW} {\sc J. Qiao} and {\sc X. Wang}: Subordination of $p$-harmonic mappings, \textit{Bull. Belg. Math. Soc. Simon Stevin} {\bf 19} (2012), 47--61.

\bibitem{R} {\sc T. Rad\'o}: \"Uber den Analytischen Charakter der Minimalfl\"achen, \textit{Math. Z.} {\bf 24} (1926), 321--327.


\end{thebibliography}
\end{document}